\newcommand{\bb}[1]{\mathbb{#1}}
\newcommand{\cl}[1]{\mathcal{#1}}
\theoremstyle{plain}
\newtheorem{thm}{Theorem}
\newtheorem{lem}[thm]{Lemma}
\newtheorem{cor}[thm]{Corollary}
\theoremstyle{remark}
\newtheorem*{rk}{Remark}
\newtheorem{rem}[thm]{Remark}
\theoremstyle{definition}
\begin{document}

\title{Real interpolation  and transposition of certain function spaces}

\author{by\\
Gilles Pisier\\
Texas A\&M University\\
College Station, TX 77843, U. S. A.\\
and\\
Universit\'e Paris VI\\
Equipe d'Analyse, Case 186, 75252\\
Paris Cedex 05, France}

\date{}

\maketitle

\begin{abstract} 
Our starting point is a lemma due to Varopoulos.
We give
a different proof of a generalized form this lemma, that yields an equivalent description of the $K$-functional
for the interpolation couple $(X_0,X_1)$ where
$X_0=L_{p_0,\infty}(\mu_1; L_q(\mu_2))$
and $X_1=L_{p_1,\infty}(\mu_2; L_q(\mu_1))$
where $0<q<p_0,p_1\le \infty$ and $(\Omega_1,\mu_1), (\Omega_2,\mu_2)$ are arbitrary measure spaces. When $q=1$, this implies that the space $(X_0,X_1)_{\theta,\infty}$ ($0<\theta<1$) can be identified
with a certain  space of operators. 
We also give an extension of the Varopoulos Lemma to pairs (or finite families) of conditional expectations that seems of independent interest.
The present paper is motivated by non-commutative applications
that we choose to publish separately.
\end{abstract}

Motivated by certain non-commutative analogues of a Lemma due to Varopoulos \cite{Va}, we noticed an extension of his lemma that was overlooked in \cite{HP}.
 The main result is a dual characterization of the functions in the space
\begin{equation}\label{real-eq1}
L_{p_1,\infty} (\mu_1; L_q(\mu_2)) + L_{p_2,\infty}(\mu_2;L_q(\mu_1))
\end{equation}
when $0<q<p_1,p_2\le \infty$ and $(\Omega_1,\mu_1), (\Omega_2,\mu_2)$ are arbitrary measure spaces.

The equivalent condition for a (measurable) function $f\colon \ \Omega_1\times\Omega_2\to {\bb R}$ to belong to this space is
\begin{equation}\label{real-eq2}
\sup \int_{E_1\times E_2} |f| \ d\mu_1 d\mu_2 \left(\mu_1(E_1)^{\frac1q -\frac1{p_1}} + \mu_2(E_2)^{\frac1q-\frac1{p_2}}\right)^{-1} < \infty
\end{equation}
where the sup runs over all possible (measurable) subsets $E_j\subset\Omega_j$ $(j=1,2)$.\\
In \cite{HP} only the case $p_1=p_2$ is considered and the proof does not seem to extend further.
This result extends to functions $f(\omega_1,\omega_2,\ldots,\omega_n)$ of any number of variables. The relevant space is then
\[
\sum\nolimits^n_{j=1} L_{p_j,\infty}(\mu_j; L_q(\widehat\mu_j))
\]
where $\widehat\mu_j = \mu_1 \times\cdots\times \mu_{j-1} \times \mu_{j+1} \times\cdots\times \mu_n$.

Our main result can of course be formulated as a two-sided inequality expressing the equivalence of the norms appearing in \eqref{real-eq1} and \eqref{real-eq2}.
When we write this inequality for the pair $(\mu_1,t\mu_2)$ with $t>0$, we obtain an equivalent form of the $K$-functional for the pair of spaces composing the sum in \eqref{real-eq1}.

Using this expression for the $K$-functional, one derives a description for the real interpolation space
\[
(X_1,X_2)_{\theta,\infty}
\]
when $X_j = L_{p_j,\infty}(\mu_j; L_q(\widehat\mu_j))$, $j=1,2$. The condition we find for this space is particularly striking, it is simply
\[
\sup_{E_j\subset\Omega_j} \left(\int_{E_1\times E_2} |f|^q \ d\mu_1 d\mu_2 \right)^{1/q}(\mu(E_1)^{\alpha_1(1-\theta)} \mu(E_2)^{\alpha_2\theta})^{-1}.
\]
where $\alpha_j = \frac1q - \frac1{p_j}$.

Let $(\Omega,\mu)$ be a measure space. Recall that the ``weak $L_p$'' space $L_{p,\infty}(\mu)$ is formed of all measurable functions $f\colon \ \Omega\to {\bb R}$ such that
\[
\|f\|_{p,\infty} = \sup_{c>0} (c^p\mu\{|f|>c\})^{1/p} < \infty.
\]
When $p>1$, the quasi-norm $\|\cdot\|_{p,\infty}$ is equivalent to the following norm
\begin{equation}\label{99}
\|f\|_{[p,\infty]} = \sup\left\{\int_E |f| \frac{d\mu}{\mu(E)^{1/p'}}\ \Big|\ E\subset \Omega\right\}.
\end{equation}
When $p>1$,  $L_{p,\infty}(\mu)$ is the dual of the ``Lorentz space'' $L_{p',1}(\mu)$ that can be defined
(see e.g. \cite{BL}) as formed of those $f$ such that
\begin{equation}\label{100}
[f]_{p,1} = \int^\infty_0 \mu\{|f|>c\}^{1/p} \ dc < \infty.
\end{equation}

For a Banach space valued function $f\colon \ \Omega\to B$, we set (for $p>1$)
\begin{equation}\label{101}
\|f\|_{L_{p,\infty}(\mu ; B)} = \sup_{E\subset\Omega} \int_E\|f\| d\mu \ \mu(E)^{-1/p},
\end{equation}
and we denote by ${L_{p,\infty}(\mu ; B)}$ the space of functions in $L_1(\mu ; B)$ (in Bochner's sense)
for which the latter norm is finite.

\begin{rem}\label{rk-lo}
Let $(\Omega,\mu)$ be any measure space. Let $f\colon \ \Omega\to {\bb R}$ be any measurable function. Note that
\begin{equation}\label{re-eq11}
|f| = \int^\infty_0 1_{\{|f|>c\}} dc = \int^\infty_0 \mu\{|f| >c\}^{1/p} \varphi_c \ dc
\end{equation}
where $\varphi_c = \mu\{|f|>c\}^{-1/p} 1_{\{|f|>c\}}$. Let ${\cl S}$ be the space of integrable step functions and let $T\colon \ {\cl S}\to B$ be a linear map into a Banach space. Then $T$ extends boundedly to $L_{p,1}(\mu)$ iff there is a constant $c$ such that for any measurable subset $E\subset\Omega$ and any $g$ in $L_\infty(\mu)$ we have
\[
\|T(g1_E)\|_B \le c\mu(E)^{1/p}.
\]
This well known fact can be derived easily from \eqref{100}. Indeed, if $g=f|f|^{-1}$, we deduce from \eqref{re-eq11} that if $\|f\|_{p,1}\le 1$  then $f$ lies in the closed convex hull of the set $\{g\varphi_c\mid c>0\}$. 
\end{rem}

We start by stating
the Varopoulos lemma. In   \cite{Va}   he proved it for $\Omega_1=\Omega_2 = [1,\ldots, n]$ equipped with counting measure. Later on, some generalizations were given in \cite{HP}. 

\begin{lem}\label{varo}
Consider $f\colon \ \Omega_1\times\Omega_2 \to {\bb R}$ measurable. Let $X_1=L_\infty(\mu_1; L_1(\mu_2))$ and $X_2= L_\infty(\mu_2;L_1(\mu_1))$. Consider the following two properties:
\begin{itemize}
\item[\rm (i)] There are $f_1\in X_1, f_2\in X_2$ such that
\[
f = f_1+f_2 \quad \text{and}\quad \|f_1\|_{X_1} + \|f_2\|_{X_2}\le 1.
\]
\item[\rm (ii)] For any pair of measurable subsets $E_1\subset \Omega_1, E_2\subset\Omega_2$, we have
\[
\int_{E_1\times E_2} |f| \ d\mu_1 d\mu_2 \le \mu_1(E_1) + \mu_2(E_2).
\]
Then (i) $\Rightarrow$ (ii) and (ii) implies (i) for the function $f/2$.
\end{itemize}
\end{lem}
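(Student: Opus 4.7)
The implication (i) $\Rightarrow$ (ii) is immediate from Fubini: if $f = f_1 + f_2$ with $f_j \in X_j$, then for each $j$
\[
\int_{E_1 \times E_2} |f_j|\, d\mu_1 d\mu_2 \le \mu_j(E_j)\, \|f_j\|_{X_j},
\]
because $\|f_j\|_{X_j} = \operatorname*{ess\,sup}_{\omega_j} \|f_j(\omega_j, \cdot)\|_{L_1(\mu_{3-j})}$. Summing over $j$ and using (i) gives $\int_{E_1 \times E_2}|f| \le \mu_1(E_1)\|f_1\|_{X_1} + \mu_2(E_2)\|f_2\|_{X_2} \le \mu_1(E_1) + \mu_2(E_2)$.

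For the converse, I would argue by duality. With $Y = X_1 + X_2$ one has
\[
\|f\|_Y = \sup\bigl\{ \textstyle\int fh\, d\mu_1 d\mu_2 : \|h\|_{X_1^*}\le 1,\ \|h\|_{X_2^*}\le 1 \bigr\},
\]
and it suffices to bound this supremum by $2$ on the norming class of integrable simple $h$, for which $\|h\|_{X_j^*} = \int_{\Omega_j} \operatorname*{ess\,sup}_{\omega_{3-j}} |h|\, d\mu_j$. Given such $h$, put $g_1(\omega_1) := \operatorname*{ess\,sup}_{\omega_2} |h|$ and $g_2(\omega_2) := \operatorname*{ess\,sup}_{\omega_1} |h|$, so that $\|g_j\|_{L_1(\mu_j)} \le 1$ and $|h(\omega_1,\omega_2)| \le \min(g_1(\omega_1), g_2(\omega_2))$ pointwise.

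\textbf{Key step.} The decisive identity is the trivial layer-cake decomposition
\[
\min(g_1(\omega_1), g_2(\omega_2)) = \int_0^\infty 1_{\{g_1 > t\}}(\omega_1)\, 1_{\{g_2 > t\}}(\omega_2)\, dt,
\]
which writes $\min(g_1,g_2)$ as an integral of indicators of the \emph{product} sets $E_1(t) \times E_2(t)$, where $E_j(t) := \{g_j > t\}$. Applying hypothesis (ii) layerwise and using Fubini gives
\[
\int |fh|\, d\mu_1 d\mu_2 \le \int_0^\infty \bigl(\mu_1(E_1(t)) + \mu_2(E_2(t))\bigr)\, dt = \|g_1\|_{L_1(\mu_1)} + \|g_2\|_{L_1(\mu_2)} \le 2.
\]
Taking the supremum over $h$ yields $\|f\|_Y \le 2$, which is exactly the decomposition claim for $f/2$. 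The only subtlety I anticipate is the dual identification: $L_\infty(\mu_1; L_1(\mu_2))^*$ is in general strictly larger than $L_1(\mu_1; L_\infty(\mu_2))$, so one must argue that the simple functions form a norming subspace (which is routine). The real content of the lemma lies not in the duality setup, but in the combination of the pointwise majorization $|h| \le \min(g_1,g_2)$ with the factorization $1_{\{g_1>t\}}(\omega_1)\cdot 1_{\{g_2>t\}}(\omega_2) = 1_{E_1(t)\times E_2(t)}(\omega_1,\omega_2)$, which is precisely what makes hypothesis (ii) usable layerwise.
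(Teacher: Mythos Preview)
Your proof is correct and follows essentially the same route as the paper's: both argue by duality, set $g_1=\alpha$, $g_2=\beta$ to be the partial $L_\infty$-norms of the test function, use the pointwise bound $|h|\le g_1\wedge g_2$, and then apply the layer-cake identity $g_1\wedge g_2=\int_0^\infty 1_{\{g_1>t\}\times\{g_2>t\}}\,dt$ together with hypothesis~(ii) and $\int g_1\,d\mu_1+\int g_2\,d\mu_2\le 2$. Your explicit mention of the norming issue for $X_j^*$ is a point the paper leaves implicit.
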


The Varopoulos proof, and that of \cite{HP} that mimics it, focus on the case when $f$ is an $n\times n$ matrix and use induction on $n$.

We found a completely different very direct (but dual) proof as follows: 
\begin{proof}
 That (i) $\Rightarrow$ (ii) is obvious. Conversely,
assume (ii). To show that $f/2$ satisfies (i), it suffices by duality to show that
\begin{equation}\label{real-eq3}
\left|\int fg\right| \le 2
\end{equation}
for any $g\colon \ \Omega_1\times\Omega_2$ such that $\max\{\|g\|_{X^*_1}, \|g\|_{X^*_2}\}\le 1$,  equivalently we may restrict to $g$ such that
\[
\max\left\{\int \|g\|_{L_\infty(\mu_2)} d\mu_1, \int \|g\|_{L_\infty(\mu_1)} d\mu_2\right\} \le 1.
\]
Let
\[
\alpha(\omega_1) = \|g(\omega_1,\omega_2)\|_{L_\infty(d\mu(\omega_2))} \quad \text{and}\quad \beta(\omega_2) = \|g(\omega_1,\omega_2)\|_{L_\infty(d\mu(\omega_1))}.
\]
We set $\varphi = \alpha\wedge\beta$ (where $(\alpha\wedge\beta)(t) \overset{\text{def}}{=} \min(\alpha(t), \beta(t))$). We have
\[
g = \varphi\cdot \widetilde g
\]
where $\|\widetilde g\|_{L_\infty(\mu_1\times \mu_2)} \le 1$. We now use $\alpha\wedge \beta = \int^\infty_0 1_{\{\alpha\wedge \beta>c\}} dc$, this gives us
\[
\alpha\wedge\beta = \int^\infty_0 1_{E_c\times F_c} \ dc
\]
where $E_c = \{\alpha>c\}$ and $F_c = \{\beta>c\}$. We can rewrite this as
\[
\alpha\wedge\beta = \int^\infty_0 (\mu_1(E_c) + \mu_2(F_c) )\varphi_c\ dc
\]
where $\varphi_c = (\mu_1(E_c) + \mu_2(F_c))^{-1} 1_{E_c\times F_c}$. Then, observing that $\int^\infty_0 \mu_1(E_c) + \mu_2(E_c)dc=$ $\int \alpha d\mu_1 + \int \beta \ d\mu_2 \le 2$, we find
\[
\int |fg| \ d\mu_1 d\mu_2 = \left|\int |f(\alpha\wedge\beta)\widetilde g|\right| \le 2 \sup_c \int|f|\varphi_c \ d\mu_1 d\mu_2
\]
but (ii) implies $\int |f|\varphi_c\le 1$ so we obtain \eqref{real-eq3}. 
\end{proof}
\begin{rk} We could not find a modified formulation
avoiding the presence of some  extra factor (here equal to 2)  spoiling the equivalence in Lemma \ref{varo}, but this might exist.
\end{rk}
Our extension of the Varopoulos result is based on the following

\begin{lem}\label{lem100}
Let $1<p_1,p_2<\infty$. Let $f\colon \ \Omega_1\times\Omega_2\to {\bb R}$ be measurable such that
\[
\sup_{E_j\subset \Omega_j}\left\{\int_{E_1\times E_2} |f| \ d\mu_1 d\mu_2 (\mu_1(E_1)^{1/p'_1} + \mu_2(E_2)^{1/p'_2})^{-1}\right\} \le 1.
\]
Then there is a decomposition
\[
f=f_1+f_2
\]
with $\|f_1\|_{L_{p_1,\infty}(\mu_1;L_1(\mu_2))} + \|f_2\|_{L_{p_2,\infty}(\mu_2; L_1(\mu_1))}\le 2$. \end{lem}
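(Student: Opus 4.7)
The plan is to adapt the dual argument of Lemma~\ref{varo} to the weak-$L_p$ setting. Write $X_j = L_{p_j,\infty}(\mu_j;L_1(\mu_{3-j}))$, $j=1,2$, and let $Y_j = L_{p'_j,1}(\mu_j;L_\infty(\mu_{3-j}))$ be the candidate predual, with the Lorentz quasi-norm \eqref{100} applied to the scalar modulus $\omega_j \mapsto \|g(\omega_j,\cdot)\|_{L_\infty(\mu_{3-j})}$. With the norm \eqref{101} on $X_j$, one has the isometric identification $(Y_1\cap Y_2)^* = X_1 + X_2$. By Hahn--Banach it therefore suffices to show that
\[
\Bigl|\int fg\,d\mu_1 d\mu_2\Bigr| \le 2
\]
whenever $\max(\|g\|_{Y_1},\|g\|_{Y_2})\le 1$.

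Given such a $g$, I would set, exactly as in the proof of Lemma~\ref{varo},
\[
\alpha(\omega_1) = \|g(\omega_1,\cdot)\|_{L_\infty(\mu_2)}, \qquad \beta(\omega_2) = \|g(\cdot,\omega_2)\|_{L_\infty(\mu_1)}.
\]
The assumption on $g$ translates via \eqref{100} to
\[
[\alpha]_{p'_1,1} = \int_0^\infty \mu_1\{\alpha>c\}^{1/p'_1}\,dc \le 1, \qquad [\beta]_{p'_2,1} \le 1.
\]
Since $|g(\omega_1,\omega_2)|\le \min(\alpha(\omega_1),\beta(\omega_2)) =: (\alpha\wedge\beta)(\omega_1,\omega_2)$, I then write $g=(\alpha\wedge\beta)\widetilde g$ with $\|\widetilde g\|_\infty\le 1$ and use the layer-cake identity
\[
\alpha\wedge\beta = \int_0^\infty 1_{E_c\times F_c}\,dc, \qquad E_c=\{\alpha>c\},\ F_c=\{\beta>c\}.
\]

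Substituting into $\int fg\,d\mu_1 d\mu_2$, applying Fubini, and invoking the hypothesis of the lemma with $E_1=E_c$, $E_2=F_c$, yields
\[
\Bigl|\int fg\,d\mu_1 d\mu_2\Bigr| \le \int_0^\infty \int_{E_c\times F_c}|f|\,d\mu_1 d\mu_2\,dc \le \int_0^\infty \bigl(\mu_1(E_c)^{1/p'_1} + \mu_2(F_c)^{1/p'_2}\bigr)dc = [\alpha]_{p'_1,1} + [\beta]_{p'_2,1} \le 2,
\]
which is the required bound.

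The main technical point, and the only place where this deviates from a verbatim copy of the proof of Lemma~\ref{varo}, is the duality $X_j = Y_j^*$ used at the outset. In the scalar case this comes out of a layer-cake computation based on \eqref{99} and \eqref{100}; the vector-valued analogue then follows by combining the scalar duality with Remark~\ref{rk-lo}, which reduces the relevant estimates on $L_{p'_j,1}(\mu_j;L_\infty(\mu_{3-j}))$ to step functions. Once that duality is in place, the body of the argument is a straight transcription of the Varopoulos-style computation, with the additive weight $\mu_1(E_1)+\mu_2(E_2)$ appearing in (ii) of Lemma~\ref{varo} replaced by $\mu_1(E_1)^{1/p'_1}+\mu_2(E_2)^{1/p'_2}$.
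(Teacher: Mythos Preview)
Your proof is correct and follows essentially the same duality argument as the paper: reduce to the bilinear estimate $\bigl|\int fg\bigr|\le 2$ for $g$ in the unit ball of $L_{p'_1,1}(\mu_1;L_\infty(\mu_2))\cap L_{p'_2,1}(\mu_2;L_\infty(\mu_1))$, set $\alpha,\beta$ as the fiberwise $L_\infty$-norms, and use the layer-cake decomposition of $\alpha\wedge\beta$ together with the hypothesis on $f$. Your bookkeeping is in fact slightly more streamlined than the paper's, since you integrate $\int_0^\infty\int_{E_c\times F_c}|f|\,dc$ directly rather than first renormalizing to functions $\psi_c$ and then taking a supremum, but the two computations are equivalent.
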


\begin{proof}
We proceed exactly as in the above proof. By duality it suffices to show
\[
\int |fg|\le 2
\]
for any $g\colon \ \Omega_1\times\Omega_2$ such that
\[
\max\{\|g\|_{L_{p'_1,1}(\mu_1,L_1(\mu_2))}, \|g\|_{L_{p'_2,1}(\mu_2,L_1(\mu_1))}\} \le 1.
\]
Let $\alpha$ and $\beta$ be as before.

We now have
\[
\max\left\{\int^\infty_0 \mu_1(\alpha>c)^{1/p'_1} dc, \int^\infty_0 \mu_2\{\beta>c\}^{1/p'_2} dc\right\}\le 1.
\]
We can thus write
\[
\alpha\wedge\beta = \int^\infty_0 (\mu_1(E_c)^{1/p'_1} + \mu_2(F_c)^{1/p'_2})\psi_c \ dc
\]
where $\psi_c = (\mu_1(E_c)^{1/p'_1} + \mu_2(F_c)^{1/p'_2})^{-1} 1_{E_c\times F_c}$. Since our assumption implies
\[
\int |f|\psi_c \ d\mu_1 d\mu_2 \le 1
\]
we conclude as before that
\[
\int |fg| d\mu_1 d\mu_2 \le 2 \sup_c \int |f|\psi_c \le 2.\eqno \qed
\]
\renewcommand{\qed}{}\end{proof}

\begin{rem}\label{rem-q}
Let $0<q<\infty$. The preceding Lemma applied to $|f|^{q}$ gives us a sufficient condition for $f$ to decompose as a sum $f=f_1+f_2$ with
\[
f_1\in L_{qp_1,\infty} (\mu_1; L_q(\mu_2)), f_2\in L_{qp_2,\infty}(\mu_2;L_q(\mu_1)).
\]
\end{rem}

We now give applications to the real interpolation method. We refer to \cite{BL} for all undefined terms.
We just recall that if $(A_0,A_1)$ is a compatible couple of Banach spaces,
then for any $x\in A_0+A_1$ the $K$-functional is defined by
$$\forall t>0\qquad K_t(x;A_0,A_1)= \inf
\big({\|x_0\|_{A_0}+t\|x_1\|_{A_1}\ | \ x=x_0+x_1,x_0\in
A_0,x_1\in A_1}). $$
Recall that the (``real" or ``Lions-Peetre" interpolation) space
$(A_0,A_1)_{\theta,q}$ is defined, for $0<\theta<1$ and $1\le q\le \infty$, as the space of all $x$
in $A_0+A_1$ such that $\|x\|_{\theta,q} <\infty$ where
 $$\|x\|_{\theta,q} =(\int{(t^{-\theta}K_t(x,A_0,A_1))^{q}
dt/t})^{1/q} ,$$
with the usual convention when $q=\infty$.

\begin{thm}\label{thm8}
Let $f\colon \ \Omega_1\times\Omega_2\to {\bb R}$ be measurable. Let $0<q<p_1,p_2\le \infty$. For simplicity of notation, let
\[
K_t(f) =K_t(f; L_{p_1,\infty}(\mu_1; L_q(\mu_2)), L_{p_2,\infty}(\mu_2; L_q(\mu_1)).
\]
Then there are positive constants $c,C$ depending only on $q,p_1,p_2$ such that
\begin{equation}
ck_t(f) \le K_t(f) \le Ck_t(f)\tag*{$\forall t>0$}
\end{equation}
where
\begin{equation}\label{re-eq10}
k_t(f) = \sup\left\{\left(\int_{E_1\times E_2} |f|^q\ d\mu_1 d\mu_2\right)^{1/q} (\mu _1(E_1)^{\frac1q-\frac1{p_1}} + t^{-1}\mu(E_2)^{\frac1q-\frac1{p_2}})^{-1}\right\}.
\end{equation}
\end{thm}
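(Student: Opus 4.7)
My plan is to establish the two inequalities separately: the lower bound $K_t(f)\ge c\,k_t(f)$ is routine, while the upper bound uses Lemma \ref{lem100} via Remark \ref{rem-q}. For the lower bound, given any decomposition $f=f_1+f_2$ with $f_j\in X_j$, the standard embedding $\|g\mathbf 1_E\|_{L_q(\mu)}\le C\,\mu(E)^{1/q-1/p}\|g\|_{L_{p,\infty}(\mu)}$ (valid for $q<p$), applied in the Bochner sense, gives
\[
\Bigl(\int_{E_1\times E_2}|f_j|^q\,d\mu_1 d\mu_2\Bigr)^{1/q}\le C\,\mu_j(E_j)^{1/q-1/p_j}\,\|f_j\|_{X_j}.
\]
Combined with $|f|^q\le C_q(|f_1|^q+|f_2|^q)$ and the elementary rearrangement $aA+bB\le\max(a,t^{-1}b)(A+tB)$, and then taking $\sup_{E_j}$ and $\inf$ over decompositions, this yields $k_t(f)\le C\,K_t(f)$.

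For the upper bound, set $\lambda=k_t(f)$ and rescale to $\tilde\mu_2=c\mu_2$ with $c>0$ determined by $c^{1-q/p_2}=t^{-q}$, i.e.\ $c=t^{-q/(1-q/p_2)}$ (positive since $q<p_2$). Starting from the defining inequality of $k_t$ and using $(a+b)^q\asymp_q a^q+b^q$ to switch to the additive form, the hypothesis $k_t(f)=\lambda$ becomes
\[
\int_{E_1\times E_2}|f|^q\, d\mu_1 d\mu_2\le C_q\lambda^q\bigl(\mu_1(E_1)^{1-q/p_1}+t^{-q}\mu_2(E_2)^{1-q/p_2}\bigr).
\]
Rewriting $t^{-q}\mu_2(E_2)^{1-q/p_2}=\tilde\mu_2(E_2)^{1-q/p_2}$ and dividing through by $C_q c\lambda^q$ places the nonnegative function $G=|f|^q/(C_q c\lambda^q)$ exactly into the hypothesis of Lemma \ref{lem100} with exponents $p_j/q>1$ on the measure pair $(\mu_1,\tilde\mu_2)$. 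The lemma then yields $G=G_1+G_2$ with bounded norms in $L_{p_1/q,\infty}(\mu_1;L_1(\tilde\mu_2))$ and $L_{p_2/q,\infty}(\tilde\mu_2;L_1(\mu_1))$ respectively.

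To pass from this to a signed splitting of $f$, I would use a lattice trick: since $G\ge 0$, redefine $G_1\leftarrow G\mathbf 1_{A_1}$ and $G_2\leftarrow G\mathbf 1_{A_2}$ with $A_1=\{G_1\ge G_2\}$, $A_2=A_1^c$, which keeps each $X_j$-norm bounded up to a factor $2$ while making the summands nonnegative, disjointly supported, and exactly summing to $G$. Setting $f_1=f\mathbf 1_{A_1}$ and $f_2=f\mathbf 1_{A_2}$ gives $|f_j|^q\le 2C_q c\lambda^q G_j$, and the identity $\|h\|_{L_{p,\infty}(\mu;L_q(\nu))}^q=\|\int|h|^q d\nu\|_{L_{p/q,\infty}(\mu)}$ together with the dilation rules $\|\cdot\|_{L_{p,\infty}(c\mu)}=c^{1/p}\|\cdot\|_{L_{p,\infty}(\mu)}$ and $\|\cdot\|_{L_1(c\mu)}=c\|\cdot\|_{L_1(\mu)}$ unwind to $\|f_1\|_{X_1}\le C\lambda$ and $\|f_2\|_{X_2}\le C\lambda\,c^{1/q-1/p_2}=C\lambda t^{-1}$. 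Summing yields $K_t(f)\le\|f_1\|_{X_1}+t\|f_2\|_{X_2}\le 2C\lambda$.

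The main obstacle is the bookkeeping of powers of $c$ and $t$: the rescaling $\mu_2\mapsto c\mu_2$ acts by different powers on $L_q(\mu_2)$ (appearing inside $X_1$) and on $L_{p_2,\infty}(\mu_2)$ (appearing inside $X_2$), and the specific exponent $c=t^{-q/(1-q/p_2)}$ is forced by the requirement $c^{1/q-1/p_2}=t^{-1}$, which is precisely what converts the raw bound on $\|f_2\|_{X_2}$ into a term of size $\lambda$ in the $t$-weighted sum. A secondary nuisance is the equivalence $(a+b)^{1/q}\asymp_q a^{1/q}+b^{1/q}$, needed to interchange the two natural forms of the sum of measure-powers in \eqref{re-eq10} versus the hypothesis of Lemma \ref{lem100}.
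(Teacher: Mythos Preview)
Your proof is correct and follows essentially the same strategy as the paper: rescale $\mu_2$ and apply Lemma~\ref{lem100} to $|f|^q$ with exponents $p_j/q$, then unwind. The paper organizes this slightly differently---it first treats $q=1$ and then invokes the power relation $(K_s(|f|^q))^{1/q}\simeq K_{s^{1/q}}(f)$ together with Remark~\ref{rem-q}---whereas you handle general $q$ in one pass and make explicit the lattice trick (choosing $A_1=\{G_1\ge G_2\}$) that the paper hides inside Remark~\ref{rem-q}; you also spell out the easy lower bound, which the paper omits.
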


\begin{proof}
Assume first that $q=1$. Let $X_1=L_{p_1,\infty}(\mu_1; L_1(\mu_2))$, $X_2 = L_{p_2,\infty}(\mu_2; L_1(\mu_1))$. The Lemma applied to the pair $(\mu_1,t\mu_2)$ in place of the pair $(\mu_1,\mu_2)$ and then dividing the result by $t$ shows that $t^{-1} \inf\limits_{f=f_1+f_2}\{t\|f_1\|_{X_1} + t^{1/p_2}\|f_2\|_{X_2}\}$ is equivalent to 
\[
\sup_{E_1,E_2} \left\{\int_{E_1\times E_2} |f|\ d\mu_1 d\mu_2(\mu_1(E_1)^{1/p'_1} + t^{1/p'_2}\mu(E_2)^{1/p'_2})^{-1}\right\}.
\]
Thus we find that this last expression is equivalent to $K_s(f)$ for $s=t^{\frac1{p_2}-1}=t^{-\frac1{p'_2}}$. If we now change variable from $t$ to $s$ we find $K_s(f)$ equivalent to
\[
\sup_{E_1,E_2} \left\{\iint_{E_1\times E_2} |f|\  d\mu_1 d\mu_2 (\mu_1(E_1)^{1/p'_1} + s^{-1}\mu_2(E_2)^{1/p'_2})\right\}.
\]
If we now apply the case $q=1$ that we just verified with $|f|^q$ in place of $|f|$, 
and use Remark \ref{rem-q}, we find an equivalent for $K_s(|f|^q; X_1,X_2)$. The result then follows from elementary calculations: since
\[
(K_s(|f|^q; X_1,X_2))^{1/q} \simeq K_{s^{1/q}}(f; L_{qp_1,\infty}(\mu_1; L_q(\mu_2)), L_{qp_2,\infty}(\mu_2; L_q(\mu_1))),
\]
we find that $K_{s^{1/q}}(f; L_{qp_1,\infty}(\mu_1; L_q(\mu_2)), L_{qp_2,\infty}(\mu_2; L_q(\mu_1)))$ is equivalent to
\[
\sup_{E_1,E_2} \left\{ \left(\int_{E_1\times E_2} |f|^q\ d\mu_1 d\mu_2\right)^{1/q}(\mu_1(E_1)^{1/qp'_1} + s^{-1/q}\mu(E_2)^{1/qp'_2})^{-1}\right\}.
\]
The   final adjustment consists in replacing $(qp_1,qp_2)$ by $(p_1,p_2)$
and ${s^{1/q}}$ by $t$, we then find \eqref{re-eq10}.
\end{proof}

\begin{cor}\label{cor9}
Let $f$ be as in Theorem~\ref{thm8}. Then $f\in (L_{p_1,\infty}(\mu_1; L_q(\mu_2)), L_{p_2,\infty}(\mu_2; L_q(\mu_1)))_{\theta,\infty}$ iff
\[
\sup_{E_1,E_2\subset\Omega} \left(\int_{E_1\times E_2} |f|^q\ d\mu_1 d\mu_2\right)^{\frac1q} \cdot (\mu_1(E_1)^{(1-\theta)\alpha_1} \mu(E_2)^{\theta\alpha_2})^{-1} < \infty
\]
where $\alpha_j =  \frac1q-\frac1{p_j}$ $(j=1,2)$. The corresponding norms are equivalent. 
When $q=1$, this condition means that 
the operator admitting  $|f|$ as its kernel, namely the operator  
\[
T_{|f|}\colon \ g\to \int|f(x,y)| g(y) d\mu_2(y)
\]
  is bounded from $L_{r,1}(\mu_2)$ to $L_{s,\infty}(\mu_1)$ where $\frac1r = \theta\alpha_2 = \frac\theta{p'_2}$ and $\frac1{s'} = 1 - \frac1s = \frac{(1-\theta)}{p'_1}$. 
\end{cor}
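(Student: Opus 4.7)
My plan is to deduce the interpolation description directly from Theorem~\ref{thm8}. By definition $\|f\|_{\theta,\infty}=\sup_{t>0}t^{-\theta}K_t(f)$, which by Theorem~\ref{thm8} is equivalent to $\sup_{t>0}t^{-\theta}k_t(f)$. Substituting \eqref{re-eq10} and absorbing the factor $t^{-\theta}$ into the denominator, the quantity to compute is
\[
\sup_{t>0}\sup_{E_1,E_2}\Bigl(\int_{E_1\times E_2}|f|^q\,d\mu_1 d\mu_2\Bigr)^{1/q}\bigl(t^{\theta}\mu_1(E_1)^{\alpha_1}+t^{\theta-1}\mu_2(E_2)^{\alpha_2}\bigr)^{-1}.
\]
The two suprema commute (both are pointwise suprema over a product index set), so for each fixed pair $(E_1,E_2)$ it remains to evaluate $\inf_{t>0}(t^{\theta}a+t^{\theta-1}b)$ with $a=\mu_1(E_1)^{\alpha_1}$ and $b=\mu_2(E_2)^{\alpha_2}$. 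Elementary one-variable calculus (optimum at $t^{*}=\frac{(1-\theta)b}{\theta a}$) gives the infimum as $c_\theta\,a^{1-\theta}b^{\theta}$, and substituting back yields exactly the stated characterization with constants depending only on $\theta$ and those furnished by Theorem~\ref{thm8}.

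For the operator statement when $q=1$, note that $\alpha_j=1/p_j'$ and the chosen indices satisfy $(1-\theta)\alpha_1=1/s'$ and $\theta\alpha_2=1/r$. Thus the condition on $f$ reads
\[
\int_{E_1\times E_2}|f|\,d\mu_1 d\mu_2\le C\,\mu_1(E_1)^{1/s'}\mu_2(E_2)^{1/r}\qquad\forall E_1\subset\Omega_1,\ E_2\subset\Omega_2.
\]
By Fubini the left-hand side equals $\int_{E_1}T_{|f|}(1_{E_2})\,d\mu_1$, so fixing $E_2$ and taking the sup over $E_1$ this inequality is exactly $\|T_{|f|}(1_{E_2})\|_{[s,\infty]}\le C\mu_2(E_2)^{1/r}$, i.e.\ $\|T_{|f|}(1_{E_2})\|_{L_{s,\infty}(\mu_1)}\le C'\mu_2(E_2)^{1/r}$ via the equivalence of the norms in \eqref{99} (applicable since $s>1$). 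Because the kernel $|f|$ is non-negative, $|T_{|f|}(g\,1_{E_2})|\le\|g\|_\infty T_{|f|}(1_{E_2})$ pointwise, and the same bound passes to $T_{|f|}(g\,1_{E_2})$ for every $g\in L_\infty(\mu_2)$ with $\|g\|_\infty\le 1$. Remark~\ref{rk-lo}, applied with $B=L_{s,\infty}(\mu_1)$ (Banach in the norm \eqref{99}) and $p=r$, then converts this testing-on-indicators condition into the boundedness $T_{|f|}\colon L_{r,1}(\mu_2)\to L_{s,\infty}(\mu_1)$; the reverse implication is immediate by testing on $g=1_{E_2}$ and re-using \eqref{99}.

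The main obstacle is really only bookkeeping. Part~1 reduces to a one-variable minimization together with a trivial swap of suprema, while Part~2 is a clean dual translation that rests on two ingredients already isolated in the paper: the positivity of the kernel (which lets one restrict testing to characteristic functions) and the Lorentz-interpolation criterion of Remark~\ref{rk-lo}. One minor verification is that the prescribed indices indeed give $r,s\in(1,\infty)$, which is clear since $0<\theta<1$ and $1<p_j'\le\infty$, so the equivalent norm \eqref{99} and Remark~\ref{rk-lo} both apply.
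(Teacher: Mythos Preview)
Your argument is correct and follows essentially the same route as the paper: the first part is exactly the paper's appeal to the identity $a_0^{1-\theta}a_1^{\theta}=\inf_{t>0}\{(1-\theta)a_0t^{\theta}+\theta a_1t^{\theta-1}\}$ (your one-variable calculus minimization), and the second part is precisely the paper's invocation of Remark~\ref{rk-lo}, which you have spelled out in more detail. One tiny slip: your justification that $r,s\in(1,\infty)$ says ``$1<p_j'\le\infty$'', whereas in fact $1\le p_j'<\infty$ (with $p_j'=1$ when $p_j=\infty$); the conclusion $r,s\in(1,\infty)$ is nonetheless correct.
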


\begin{proof}
The first part is clear using the definition of the norm in $(~~,~~)_{\theta,\infty}$ and the identity:
\begin{equation}\label{re-eq10a}
\forall a_0,a_1>0\qquad\qquad a^{1-\theta}_0a^\theta_1 = \inf_{t>0} \{(1-\theta) a_0t^\theta + \theta a_1t^{\theta-1}\}.
\end{equation}
The second part follows from Remark \ref{rk-lo}
\end{proof}

\begin{rem}\label{rem19} Let us say that an operator $T$
from a Lorentz space $X$ to another one $Y$  is regular if it is a linear combination
of positive (i.e. preserving positivity) bounded operators from $X$ to $Y$. In the complex case this means that
$T$ can be decomposed as $T=T_1-T_2 +i(T_3-T_4 ) $ with all $T_j$'s positive  and bounded   from $X$ to $Y$. (In the real case the imaginary part can be ignored).
We will say that $T$ is a   kernel operator
if it is defined using a  scalar kernel $f$ that is measurable on $\Omega_1\times \Omega_2$. It is easy to check that a kernel operator
$T$ is regular from $L_{r,1}(\mu_2)$ to $L_{s,\infty}(\mu_1)$ iff its kernel $f$ is such that
\[
T_{|f|}\colon \ g\to \int|f(x,y)| g(y) d\mu_2(y)
\]
  is bounded from $L_{r,1}(\mu_2)$ to $L_{s,\infty}(\mu_1)$.
  In the real case, this means equivalently that there is a positive operator $S$
  such that $\pm T\le S$ that is bounded from $L_{r,1}(\mu_2)$ to $L_{s,\infty}(\mu_1)$.
  It is worthwhile to observe that  
    $$\|T_{|f|}\colon\ L_\infty(\mu_2)\to L_{p_1,\infty}(\mu_1)\|=\|f\|_{L_{p_1,\infty}(\mu_1; L_1(\mu_2))}$$
    and  $$\|T_{|f|}\colon\ L_{p'_1,1}(\mu_2) \to L_1(\mu_1) \|=\|f\|_{L_{p_2,\infty}(\mu_2; L_1(\mu_1))}.$$
  Let $B_0$ (resp. $B_1$) be the space of regular kernel operators   $T \colon\ L_\infty(\mu_2)\to L_{p_1,\infty}(\mu_1)$ (resp. $T \colon\ L_{p'_1,1}(\mu_2) \to L_1(\mu_1) $).
  Then the preceding Corollary can be interpreted, when $q=1$, as the identification
  of $(B_0,B_1)_{\theta,\infty}$ with the space of regular kernel operators  $T \colon\ L_{r,1}(\mu_2) \to L_{s,\infty}(\mu_1) $, with $\frac 1r=\frac {1-\theta}{\infty}+\frac{\theta}{p_1'}$ and
  $ \frac 1s=\frac {1-\theta}{p_1}+\frac{\theta}{1}$.

\end{rem}
The extension of these results to functions $f$ of $n$ variables is immediate (note that the constant $2$ becomes $n$). We merely state the main point.

Consider measure spaces $(\Omega_j;\mu_j)$ $1\le j\le n$ and a measurable function $f\colon \ \Omega_1\times\cdots\times \Omega_n \to {\bb R}$.

Let $0 < q < p_1,\ldots, p_n\le \infty$. Let $Y_j = L_{p_j,\infty}(\mu_j, L_q(\widehat\mu_j))$. Then $f\in Y_1 +\cdots+ Y_n$ iff
\[
\sup_{E_j\subset\Omega_j} \left(\int_{E_1\times\cdots\times E_n} |f|^q d\mu_1\ldots d\mu_n\right)^{1/q} (\mu_1(E_1)^{\alpha_1} +\cdots+ \mu_n(E_n)^{\alpha_n})^{-1} < \infty
\]
where $\alpha_j = \frac1q-\frac1{p_j}$.

Replacing each $\mu_j$ by $ s_j \mu_j$ ($s_j>0$)  and then setting $t_j=s_j^{-\alpha_j}$
we find that there are constants $c,C>0$ such that 
the generalized $K$-functional
$$K(t_1,\cdots,t_n)=\inf\{ t_1\|x_1\|_{Y_1}+\cdots +t_n \|x_n\|_{Y_n}\mid x=x_1+\cdots+x_n\}$$
 is equivalent (with constants independent of $t=(t_j)$)
 to
 $$  \sup_{E_j\subset\Omega_j} \left(\int_{E_1\times\cdots\times E_n} |f|^q d\mu_1\ldots d\mu_n\right)^{1/q} (t_1^{-1}\mu_1(E_1)^{\alpha_1} +\cdots+ t_n^{-1}\mu_n(E_n)^{\alpha_n})^{-1} .$$
This gives a new example where the $K$-functionals considered in \cite{S}
(see also \cite{CP}) can be computed, at least  up to equivalence.

\begin{rem}\label{rem20}
The preceding Lemma \ref{lem100} can be reformulated as showing the following implication:\ If
\begin{equation}
\int_{E_1\times E_2} |f| \le \mu_1(E_1)^{1/p'_1} + \mu_2(E_2)^{1/p'_2}, \tag*{$\forall E_j\subset \Omega_j$}
\end{equation}
then there is a decomposition $f=f_1+f_2$ with $f_1,f_2$ such that
\begin{equation}
\int_{E_1\times E_2} |f| \le 2\mu_1(E_1)^{1/p'_1} \quad \text{and}\quad \int_{E_1\times E_2} |f_2| \le 2\mu_2(E_2)^{1/p'_2}. \tag*{$\forall E_j\subset \Omega_j$}
\end{equation}

Except for the factor 2, this resembles very much the kind of statements that are usually proved by the Hahn--Banach theorem, but we do not see how to prove it in this way.

More generally, the above simple minded argument extends
to the spaces  originally introduced by G.G. Lorentz
who denoted them by $\Lambda(\varphi)$ in \cite{Lo}. Here $\varphi$ is a non-negative
decreasing (meaning non-increasing) function on an interval of the real line
equipped with Lebesgue measure. We denote by $\Phi$ the primitive of $\varphi$ that vanishes at the origin
(so $\Phi$  is concave and increasing).
One can obtain a generalization
of the preceding decomposition to the case when
$x\mapsto x^{1/p_1'}$ and $x\mapsto x^{1/p_2'}$  are replaced by
two such functions $x\mapsto \Phi_1(x)$ and $x\mapsto \Phi_2(x)$. 
\end{rem}

There is a generalization of Lemma \ref{varo} to functions of not necessarily independent variables that seems to be of independent interest, as follows. Consider two conditional expectation operators ${\bb E}_j\colon \ L_1(\mu)\to L_1(\mu)$ $(j=1,2)$ on a measure space $(\Omega,{\cl A},\mu)$. For simplicity, we assume that $\mu(\Omega)=1$ but this is not really essential. Then let ${\cl B}_j\subset {\cl A}$ be the $\sigma$-subalgebra that is fixed by ${\bb E}_j$. Let $C_j$ be the space of measurable scalar valued functions $x$ on $(\Omega,\mu)$ such that
\begin{equation}\label{eq200}
\|x\|_{C_j} \overset{\text{def}}{=} \|{\bb E}_j(|x|)\|_\infty <\infty.
\end{equation}
We view $(C_1,C_2)$ as a compatible pair of Banach spaces. Consider $x\in C_1+C_2$ with $\|x\|_{C_1+C_2} \le 1$. Then a simple verification shows that for any pair of subsets $E_j\subset\Omega$ $(j=1,2)$, with $E_j$ assumed ${\cl B}_j$-measurable, we have
\begin{equation}\label{eq100}
\int_{E_1\cap E_2} |x| d\mu \le \mu(E_1) + \mu(E_1).
\end{equation}
Conversely, the above proof of Lemma \ref{varo} shows that \eqref{eq100} implies that
\[
\|x\|_{C_1+C_2}\le 2.
\]
Indeed, we may run the same duality argument:\ given $y$ such that both $|y|\le f_j$ $(j=1,2)$ with $f_j$, ${\cl B}_j$-measurable such that $\|f_j\|_1\le 1$. Then we have $y=(f_1\wedge f_2)\widehat y$ with $\|\widehat y\|_\infty \le 1$ and
\[
f_1\wedge f_2 = \int^\infty_0 1_{E^c_1\cap E^c_2} \ dc = \int^\infty_0 (\mu(E^c_1)+ \mu(E^c_2))\varphi_c \ dc
\]
with 
$E^c_j = \{f_j>c\}$
and $$\varphi_c = 1_{E^c_1\cap E^c_2}(\mu(E^c_1) +\mu(E^c_2))^{-1}.$$Thus we find that \eqref{eq100} implies $\int|xy| d\mu\le 2$ and we conclude by duality.

The same argument works for any number of conditional expectations. In terms of operators and kernels, the norm \eqref{eq200} can be described like this: we associate to $x\in C_j$ the operator $M_x$ of multiplication by $x$ on $L_1({\cl A},\mu)$. We then have
\[
\|x\|_{C_j} = \|M_x\colon \ L_1({\cl B}_j,\mu)\to L_1({\cl A},\mu)\|.
\]
We leave the extension of Lemma~\ref{lem100} to the reader.

In a separate paper \cite{P5}, we give non-commutative generalizations
of the preceding results.


\begin{thebibliography}{99}
\bibitem{BL} J. Bergh and J. L\"ofstr\"om. Interpolation spaces. An
 introduction. Springer Verlag, New York. 1976.
 
 \bibitem{CP} 
 F. Cobos and  J. Peetre,    Interpolation of compact operators: the multidimensional case.  Proc. London Math. Soc. (3)  63  (1991),    371-400. 
 
\bibitem{HP} A. Hess and G. Pisier, The $K_t$-functional for the interpolation couple $(L^\infty (d\mu; L^1(d\nu)),\allowbreak L^\infty (d\nu; L^1(d\mu))$ 
Quart. J. Math. Oxford Ser. (2) 46 (1995), 333-344. 

 \bibitem{Lo} G. G. Lorentz,  On the theory of spaces $\Lambda$. Pacific J. Math. 1, (1951). 411Ð429. 
 
 \bibitem{P1}  
G. Pisier, Complex
interpolation and regular operators between Banach
lattices. Archiv der Math. (Basel) 62 (1994) 261-269.

  
  \bibitem{P5} G. Pisier, Real interpolation between row and column spaces. Preprint to appear.
    
  \bibitem{S}    G. Sparr,   Interpolation of several Banach spaces.  Ann. Mat. Pura Appl. (4)  99  (1974), 247-316. 

 \bibitem{Va}  N. Varopoulos, On an inequality of von Neumann and an application of the metric theory of tensor products to operator theory.
 J. Funct. Anal. 16 (1974) 83-100.
 
 
  
\end{thebibliography}
\end{document}